\newtheoremstyle{mystyle}
  {}
  {}
  {\itshape}
  {}
  {\bfseries}
  {.}
  { }
  {\thmname{#1}\thmnumber{ #2}\thmnote{ (#3)}}
\theoremstyle{mystyle}
\newtheorem{Thm}{Theorem}
\newtheorem{Conj}[Thm]{Conjecture}
\theoremstyle{definition}
\theoremstyle{remark}
\declaretheoremstyle[%
  spaceabove=3pt,
  spacebelow=10pt,
  headfont=\normalfont\itshape,%
  postheadspace=.5em,%
  qed=\qedsymbol%
]{mystyle2}
\newcommand{\Z}{\mathbb{Z}}
\title{Cosmetic surgery on satellite knots}
\author{Qiuyu Ren}
\address{Department of Mathematics, University of California, Berkeley, Berkeley, CA 94709, USA}
\email{qiuyu\_ren@berkeley.edu}
\begin{document}

\begin{abstract}
We show that if there exists a knot in $S^3$ that admits purely cosmetic surgeries, then there exists a hyperbolic one with this property.
\end{abstract}

\maketitle

\section{Introduction}
For a knot $K\subset S^3$ and two different slopes $r,r'\in\mathbb{QP}^1$, the surgeries $S^3_r(K)$ and $S^3_{r'}(K)$ are \textit{purely cosmetic} if they are orientation-preservingly homeomorphic. The cosmetic surgery conjecture in $S^3$ states that there are no purely cosmetic surgeries if $K$ is nontrivial.

\begin{Conj}[{\cite[Conjecture~6.1]{gordon1990dehn}\cite[Problem~1.81A]{kirby1997problems}}]\label{conj:cos}
If $K\subset S^3$ is a nontrivial knot, $r\ne r'$, then $S^3_r(K)$ and $S^3_{r'}(K)$ are not purely cosmetic.
\end{Conj}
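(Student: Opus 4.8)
The plan is to reduce the whole conjecture to the hyperbolic case and then to attack that case by layering the known Floer-theoretic obstructions on top of hyperbolic rigidity. The reduction is exactly the main result announced in the abstract: since any knot admitting purely cosmetic surgeries can be replaced by a hyperbolic one with the same property, in order to prove Conjecture~\ref{conj:cos} for every nontrivial $K$ it suffices to prove it under the additional hypothesis that $K$ is hyperbolic. This reduction itself rests on the trichotomy for knot complements — Seifert fibered (torus knots), toroidal (satellite knots), or hyperbolic — with torus knots handled classically through the Seifert invariants of their surgeries and satellite knots handled by the JSJ/companion analysis that is the content of this paper. So I would take that reduction as given and concentrate entirely on a hyperbolic $K$ with $S^3_r(K)\cong S^3_{r'}(K)$ orientation-preservingly, $r\neq r'$.

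First I would pin down the slopes. Writing $r=p/q$, the Casson--Walker invariant together with the computation of Boyer--Lines forces $r'=-r$, and the first homology forces the common denominator $p$; the Heegaard Floer correction terms, in the form established by Ni--Wu, then impose the arithmetic constraint $q^2\equiv -1\pmod p$. Next I would drive the genus down: the interaction between $\widehat{HFK}$, the Seifert genus, and the surgery correction terms, in the sharp version due to Hanselman, confines any surviving configuration to a very short list — slopes $\pm 2$ with $g(K)=2$, or slopes $\pm 1/q$ with $g(K)=1$. At this point the hyperbolic case has been cut down to finitely many shapes of data, and the remaining task is purely to eliminate that residual list.

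The hard part will be exactly this elimination, and I expect the argument to stall here; indeed, closing it would resolve a conjecture that is currently open. The extra leverage available for hyperbolic $K$ is geometric: an orientation-preserving homeomorphism $S^3_r(K)\to S^3_{-r}(K)$ should, after identifying the surgery cores as canonical geodesics and applying Mostow rigidity to $S^3\setminus K$, be isotopic to one induced by a symmetry of the complement. The strategy would then be to show that no orientation-preserving symmetry of a hyperbolic knot complement can intertwine the $+2$ and $-2$ fillings, by comparing the two filled hyperbolic structures, their cusp shapes, and the induced $\mathbb{Z}/2$ action, and to dispatch the genus-one slopes $\pm 1/q$ by a dedicated Dehn-surgery or Floer argument. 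The essential obstacle is that neither rigidity alone nor the present Floer bounds are quite strong enough to kill the genus-$2$, slope-$\pm 2$ configuration: removing it seems to require either a new diffeomorphism-sensitive invariant that separates $r$ from $-r$ surgeries on a hyperbolic knot, or a strengthening of the Floer genus bound below $2$, and producing that input is, I believe, the crux of the entire proof.
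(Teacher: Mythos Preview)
The statement you are attempting to prove is Conjecture~\ref{conj:cos} itself, which the paper does \emph{not} prove. The paper's contribution is Theorem~\ref{thm:main} (and the sharper Theorem~\ref{thm:satellite}): the reduction of the full conjecture to the case of hyperbolic surgeries on hyperbolic knots. Your proposal correctly identifies this reduction as the first step and takes it as given, but then tries to go further and resolve the hyperbolic case. As you yourself concede, that remaining case is open, so what you have written is not a proof but a strategy sketch that, by your own admission, stalls at the decisive step.

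Two concrete remarks on the sketch. First, your list of surviving configurations is already superseded: Theorem~\ref{thm:strong} (Daemi--Eismeier--Lidman) eliminates the genus-$1$, slopes $\pm 1/q$ case, leaving only $\{r,r'\}=\{\pm2\}$, $g(K)=2$, and $\Delta_K(t)=1$. Second, and more seriously, the Mostow-rigidity step does not work as stated. An orientation-preserving homeomorphism $S^3_{+2}(K)\to S^3_{-2}(K)$ has no a priori reason to carry the surgery core to the surgery core. For generic large slopes the core is the unique shortest geodesic in the filled manifold, which would force this; but $\pm2$ are not large slopes, and even when both fillings are hyperbolic the cores need not be singled out by any isometry-invariant property. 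Without core-to-core, drilling does not yield a self-homeomorphism of $S^3\setminus K$, and the appeal to symmetries of the complement never gets off the ground. This gap is exactly why the hyperbolic case remains open and why the paper stops at the reduction rather than claiming a proof of the conjecture.
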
\vspace{-8pt}

Significant progress has been made in the past two decades. Notably, the collective work of Ni-Wu \cite{ni2015cosmetic}, Hanselman \cite{hanselman2022heegaard}, Daemi-Eismeier-Lidman \cite{daemi2024filtered} and others reduces the conjecture to the following special case.

\begin{Thm}[{\cite[Corollary~1.4]{daemi2024filtered}}]\label{thm:strong}
If $K$ is a nontrivial knot, $r\ne r'$, with $S^3_r(K)$ and $S^3_{r'}(K)$ purely cosmetic, then $\{r,r'\}=\{\pm2\}$, $K$ has Seifert genus $2$ and Alexander polynomial $1$.
\end{Thm}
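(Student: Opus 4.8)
The statement is the combined output of all currently known Floer-theoretic obstructions to purely cosmetic surgery, so the plan is to assemble it from four inputs, each treated as a black box: the Heegaard Floer (mapping cone) surgery formula in the hands of Ni--Wu, the Casson--Walker invariant, Hanselman's immersed-curve model of bordered Heegaard Floer homology, and the filtered instanton homology of Daemi--Eismeier--Lidman. The common mechanism is that an orientation-preserving homeomorphism $\varphi\colon S^3_r(K)\to S^3_{r'}(K)$ identifies every oriented-homeomorphism invariant of the two manifolds --- the order and linking form of $H_1$, the correction terms $d$ and the groups $HF^+$ (compatibly with the bijection of $\mathrm{Spin}^c$ structures induced by $\varphi$), and the Casson--Walker invariant $\lambda$ --- and into each identification one substitutes the appropriate Dehn surgery formula.

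Begin with the numerical reductions. Comparing the order of $H_1$ gives $r=p/q$ and $r'=p/q'$ in lowest terms with a common $p\ge1$ and $q\ne q'$ (for nontrivial $K$ the degenerate slopes $0$ and $\infty$ are excluded, the latter by Gordon--Luecke), while comparing linking forms imposes a quadratic-residue congruence on $(p,q,q')$. Feeding the mapping cone formula into the identification of $HF^+$ and of the $d$-invariants, Ni--Wu then deduce $q'=-q$, so $r'=-r$, together with $\tau(K)=0$ and $V_0(K)=V_0(m(K))=0$, where $m(K)$ is the mirror (note that $m(K)$ inherits a purely cosmetic surgery as well). Since $r'=-r=p/q$ one has $S^3_{-p/q}(K)=-S^3_{p/q}(m(K))$, so the Casson--Walker surgery formula $\lambda\big(S^3_{p/q}(K)\big)=\tfrac{q}{2p}\Delta_K''(1)+\lambda(L(p,q))$, combined with $\lambda(L(p,-q))=-\lambda(L(p,q))$ and the $\varphi$-invariance of $\lambda$, yields $\tfrac{q}{2p}\Delta_K''(1)+\lambda(L(p,q))=0$; once the slope has been cut down as in the next step so that the relevant lens space ($\mathbb{RP}^3$ or $S^3$) has vanishing $\lambda$, this forces $\Delta_K''(1)=0$.

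The heart of the argument is Hanselman's immersed-curve model: $\widehat{HF}\big(S^3_{p/q}(K)\big)$ is computed as the Lagrangian Floer homology of the immersed multicurve $\gamma(K)$ in the marked punctured torus paired with the line of slope $p/q$, and matching $\widehat{HF}$ for the slopes $r$ and $-r$ --- together with the $d$-invariant and total-rank constraints above and the reflection symmetry relating $\gamma(K)$ to $\gamma(m(K))$ --- forces $\gamma(K)$ into a very short list of possibilities. The resulting dichotomy is: either $\{r,r'\}=\{\pm1/q\}$ for some $q\ge1$, or $\{r,r'\}=\{\pm2\}$, in which case $g(K)=2$ and the structure of $\gamma(K)$ (equivalently of $\widehat{HFK}(K)$) forces $\widehat{HFK}(K,2)$ to have vanishing Euler characteristic; with $\Delta_K''(1)=0$ this last fact gives $\Delta_K=1$.

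There remains only the infinite family $\{r,r'\}=\{\pm1/q\}$, and this is exactly the regime that Heegaard Floer homology and the Casson--Walker invariant provably cannot resolve, since $S^3_{1/q}(K)$ and $S^3_{-1/q}(K)$ have in general identical $\widehat{HF}$, $d$-invariants, and $\lambda$. Hence the final --- and genuinely new --- ingredient must come from elsewhere: Daemi--Eismeier--Lidman's filtered instanton homology carries $SU(2)$-representation-theoretic information that obstructs $S^3_{1/q}(K)\cong S^3_{-1/q}(K)$ for nontrivial $K$. With that family eliminated, the only survivor is $\{r,r'\}=\{\pm2\}$ with $g(K)=2$ and $\Delta_K=1$. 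The decisive obstacle is precisely this last step: the $\pm1/q$ case is immune to every Heegaard-Floer- and Casson--Walker-type invariant, so ruling it out genuinely requires instanton methods; if instead one only wanted the statement through the dichotomy, the real technical labor is in the previous paragraph --- turning $\varphi$ into exact identities of truncated mapping cones and then into the immersed-curve classification.
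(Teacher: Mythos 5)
This statement is not proved in the paper at all: it is imported as a black box, cited verbatim to Corollary~1.4 of Daemi--Eismeier--Lidman, so there is no internal argument to compare yours against. Read as a survey of how that corollary is assembled in the literature, your sketch is essentially accurate and correctly ordered --- Ni--Wu's $H_1$/linking-form/mapping-cone reductions to $r'=-r$, the Boyer--Lines/Ni--Wu Casson--Walker argument giving $\Delta_K''(1)=0$, Hanselman's immersed-curve dichotomy between $\{\pm2\}$ with $g(K)=2$ and the family $\{\pm1/q\}$, and the instanton-theoretic elimination of the latter family --- and the final arithmetic ($\chi(\widehat{HFK}(K,2))=0$ gives $a_2=0$, which with $8a_2+2a_1=\Delta_K''(1)=0$ and $\Delta_K(1)=1$ forces $\Delta_K=1$) is sound given its inputs. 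But as a proof it has no independent content: each of the four pillars is explicitly treated as a black box, so the proposal amounts to the same citation the paper already makes, expanded into prose. Two cautions on accuracy: the assertion that the $\pm1/q$ case is \emph{provably} invisible to all Heegaard Floer and Casson--Walker invariants is stronger than anything established (the known computations happen to coincide; there is no such metatheorem), and you should verify rather than assume that Hanselman's $\pm2$ analysis actually yields the vanishing of $\chi(\widehat{HFK}(K,2))$, since that is the one link in your chain whose attribution is asserted on faith.
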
\vspace{-8pt}

Other evidence for Conjecture~\ref{conj:cos} abounds. To list a few, the conjecture holds for torus knots, cable knots \cite{tao2019cable}, composite knots \cite{tao2022knots}, knots up to $19$ crossings \cite{futer2024excluding}, and hyperbolic knots with $r$ an exceptional slope \cite{ravelomanana2016exceptional}\cite[Remark~1.7]{daemi2024filtered}. We do not attempt to give a complete survey and refer readers to \cite{daemi2024filtered} for more information.

In this short note, we prove the following theorem. The proof is topological, and exploits existing results mentioned above.

\begin{Thm}\label{thm:main}
Conjecture~\ref{conj:cos} holds if and only if it holds for hyperbolic surgeries on hyperbolic knots.
\end{Thm}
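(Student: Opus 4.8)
The plan is to prove the contrapositive of the nontrivial implication, the ``only if'' direction being trivial. Suppose then that $K\subset S^3$ is a nontrivial knot with $S^3_r(K)\cong S^3_{r'}(K)$ for some $r\ne r'$; I will produce a hyperbolic knot admitting purely cosmetic surgeries both of which are hyperbolic. By Theorem~\ref{thm:strong} we may take $\{r,r'\}=\{2,-2\}$. By geometrization, $K$ is a torus knot, a hyperbolic knot, or a satellite knot. Torus knots are excluded since Conjecture~\ref{conj:cos} is known for them. If $K$ is hyperbolic then neither $2$ nor $-2$ is an exceptional slope --- otherwise \cite{ravelomanana2016exceptional} (cf.\ \cite[Remark~1.7]{daemi2024filtered}) would yield Conjecture~\ref{conj:cos} for $K$ --- so $S^3_{\pm2}(K)$ are hyperbolic and $K$ itself works. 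So we may assume $K$ is a satellite knot.

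Write $K=P(C)$, choosing the companion solid torus $V$ so that the pattern exterior $N:=V\smallsetminus\mathring N(P)$ is atoroidal (possible since $E(K)$ is toroidal). The tori $\partial V$ and $\partial N(P)$ are incompressible in $N$ --- the former since $K$ is a genuine satellite (so $P$ is not contained in a ball in $V$), the latter since $K$ is nontrivial --- and $N\ne T^2\times I$ since $V$ is a genuine companion. Hence, by geometrization, $N$ is either Seifert fibered, in which case $N$ is a cable space and $K$ a cable knot, excluded by \cite{tao2019cable}, or hyperbolic; assume the latter. For a slope $\gamma$ on $\partial N(P)$ let $N(\gamma)$ be the corresponding Dehn filling of $N$, so that $S^3_\gamma(K)=N(\gamma)\cup_{\partial V}E(C)$.

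The crux is to show that a given orientation-preserving homeomorphism $\Phi\colon S^3_2(K)\to S^3_{-2}(K)$ can be isotoped so that $\Phi(\partial V)=\partial V$, $\Phi(N(2))=N(-2)$, and $\Phi(E(C))=E(C)$. When $N(\pm2)$ is hyperbolic I expect this to follow from uniqueness of the JSJ decomposition --- $\partial V$ being the torus that splits off the companion piece $E(C)$ --- together with an argument (using the homology of the surgery duals, or Mostow rigidity and hyperbolic volumes) that $\Phi$ cannot interchange the pattern piece with a piece of $E(C)$; the non-generic situations, where $N(\pm2)$ is not hyperbolic or $S^3_{\pm2}(K)$ is reducible, should be handled via the known structure of such surgeries and the torus, cable, and composite \cite{tao2022knots} knot cases. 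Granting this, set $\psi:=\Phi|_{\partial V}$, which extends to the orientation-preserving self-homeomorphism $\Phi|_{E(C)}$ of the exterior of the nontrivial knot $C$. Any such self-homeomorphism fixes $\lambda_C$ up to sign and sends $\mu_C$ to $\pm\mu_C+k\lambda_C$; were $k\ne0$, applying it to the filling $E(C)(\mu_C)=S^3$ would give a nontrivial surgery on $C$ equal to $S^3$, against Gordon--Luecke. Thus $\psi$ is isotopic to the identity or to the elliptic involution of the torus $\partial V$, so it fixes every slope on $\partial V$, and after a small isotopy $\Phi|_{N(2)}$ extends over every solid-torus filling of the $\partial V$--cusp to a homeomorphism $N(2)(\delta)\to N(-2)(\delta)$, for each slope $\delta$ on $\partial V$.

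To finish, pick any slope $\delta$ on $\partial V$ meeting the meridian $m_V$ of $V$ once: attaching a solid torus to $N$ along $\delta$ realizes the pattern $P$ as a knot $P_\delta\subset S^3$ with exterior $N(\delta)$, and $N(2)(\delta)$, $N(-2)(\delta)$ are then surgeries on $P_\delta$ along two distinct (integral) slopes, so $P_\delta$ admits purely cosmetic surgeries. By Thurston's hyperbolic Dehn surgery theorem $N(\delta)$ is hyperbolic for all but finitely many $\delta$, and is a solid torus --- i.e.\ $P_\delta$ is the unknot --- for at most finitely many $\delta$; choose $\delta$ avoiding both finite sets. Then $P_\delta$ is a nontrivial hyperbolic knot with purely cosmetic surgeries, and since $P_\delta$ is hyperbolic, neither surgery slope is exceptional (else \cite{ravelomanana2016exceptional} would give Conjecture~\ref{conj:cos} for $P_\delta$), so both surgeries are hyperbolic. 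This contradicts the assumption that Conjecture~\ref{conj:cos} holds for hyperbolic surgeries on hyperbolic knots. The main obstacle is the third paragraph: controlling how a cosmetic homeomorphism meets the companion torus and the JSJ decompositions, and disposing of the non-hyperbolic fillings.
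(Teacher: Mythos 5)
There is a genuine gap, and it sits exactly where you flag it: the third paragraph. The claim that $\Phi$ can be arranged to satisfy $\Phi(N(2))=N(-2)$ and $\Phi(E(C))=E(C)$ is not a technical point to be ``granted'' --- it is essentially the entire content of the theorem, and your proposed tools for it would not work. First, even setting up the JSJ comparison requires showing that the companion torus $\partial V$ stays incompressible in $S^3_{\pm2}(K)$ (the paper's Claim~1, which needs the classification of satellite patterns with non-toroidal surgeries plus the genus-$2$ constraint of Theorem~\ref{thm:strong} via the satellite genus formula), and that $N(\pm2)$ is hyperbolic rather than merely ``generically'' so (Claim~3--4, which invokes the Gordon et al.\ classification of multi-cusped manifolds with two exceptional fillings at distance $4$); your ``non-generic situations \dots should be handled via the known structure of such surgeries'' is hiding real work here. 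Second, and more seriously, your expectation that homology of surgery duals or Mostow rigidity/volumes will prevent $\Phi$ from interchanging the pattern piece with the companion piece is false: both $V_{+2}(K)$ and $S^3\setminus J$ are one-cusped hyperbolic manifolds with $H_1\cong\Z$ (the winding number being forced to be $1$), and nothing soft rules out their being isometric. In fact this interchange is the terminal case of the paper's argument: after reducing to a JSJ graph with exactly two vertices swapped by $\phi$ (Claims~5--7, which for iterated satellites require the Fox re-embedding theorem and a minimality argument over the number of JSJ tori), the paper must kill the swapping configuration by a completely different mechanism --- tracking which slopes on $T$ become null-homologous on each side to pin down $\Phi|_T$, then showing the twisted patterns $P_{\pm1}(U)$ are both unknots via property~P and a Kirby move, and finally contradicting Wu's theorem that two boundary-reducible fillings of a hyperbolic manifold have distance at most~$1$. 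None of this is recoverable from JSJ uniqueness alone.

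Your framing of the reduction (Theorem~\ref{thm:strong} to get $\{\pm2\}$, disposal of torus, cable, composite, and non-exceptional hyperbolic cases) and your endgame (filling the companion cusp by generic slopes and using Thurston's Dehn filling theorem to produce a hyperbolic knot with cosmetic surgeries) do match the paper's strategy, the latter being close to the paper's Claim~5. But as written the argument is a correct outer shell around an unproved core, and the particular case you hope to exclude by soft means is the one the paper needs its most specific topological input to handle.
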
\vspace{-8pt}

In fact, we can prove a stronger theorem.
\begin{Thm}\label{thm:satellite}
Suppose $K$ is a satellite knot that admits purely cosmetic surgeries. Let $X$ be the component of the JSJ decomposition of $S^3\backslash K$ adjacent to $K$ and let $V$ be $X$ with $K$ refilled in. Then $V\cong\#^n(S^1\times D^2)$ for some $n$, and $K$ as a knot in $V$ is null-homologous, hyperbolic, and admits two different hyperbolic surgeries that are orientation-preservingly homeomorphic. In particular, there exists infinitely many hyperbolic knots in $S^3$ that admit purely cosmetic hyperbolic surgeries.
\end{Thm}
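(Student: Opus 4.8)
The plan is to localize the purely cosmetic surgery to the JSJ piece of $S^3\setminus K$ adjacent to $K$, to identify that piece using Theorem~\ref{thm:strong} together with the already-settled cable, torus and composite cases, and then to produce infinitely many hyperbolic examples by re-embedding $V$ in $S^3$ with varying framings. Throughout write $E=S^3\setminus\operatorname{int}N(K)$, let $X\subseteq E$ be the JSJ piece containing $\partial N(K)$, and $V=X\cup N(K)$.

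\emph{Localization.} Every torus of $\partial X$ other than $\partial N(K)$ is an essential torus of $E$, hence a companion torus of $K$, hence bounds a solid torus in $S^3$ on the side containing $K$; chasing the nesting of these solid tori shows there is exactly one of them, $T$, and that $V$ is the solid torus it bounds — so in fact $V\cong S^1\times D^2$, the $n=1$ case of $V\cong\#^n(S^1\times D^2)$. Let $Z=E(C)$ be the exterior of the core $C$ of $V$, so that $S^3_s(K)=V_s(K)\cup_T Z$ for every slope $s$, with the $Z$-factor independent of $s$. A purely cosmetic homeomorphism $\Psi\colon S^3_r(K)\to S^3_{r'}(K)$ can be isotoped to send the JSJ family to the JSJ family; the first real point is that it can moreover be taken to carry $T$ to $T$ and hence (comparing $H_1$: the $Z$-side is torsion-free, the $V$-side is not) to carry $Z$ to $Z$, so that it restricts to an orientation-preserving homeomorphism $\Phi\colon V_r(K)\to V_{r'}(K)$ and to a self-homeomorphism of $Z$. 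Controlling the JSJ graph well enough to guarantee $\Psi(T)=T$ — i.e. that $V_{\pm2}(K)$ is not interchangeable with a complementary piece inside $Z$ — is one of the two places I expect genuine work.

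\emph{The pattern.} By Theorem~\ref{thm:strong}, $K$ has genus $2$, $\Delta_K\equiv1$, and $\{r,r'\}=\{\pm2\}$ for the Seifert framing. Writing $K=P(C)$ with winding number $w$, the relations $\Delta_K(t)=\Delta_P(t)\,\Delta_C(t^w)$ and $g(K)=|w|g(C)+g(P)$ with $C$ nontrivial force $|w|\le 2$, $\Delta_P\equiv1$, and $\Delta_C\equiv1$ when $w\ne0$; one must then rule out $w\ne0$, which is the \emph{second} place I expect difficulty — I would try to show that for $w\ne 0$ the self-homeomorphism induced on the $Z$-side forces an impossible surgery on the companion, after reducing cable and composite companions by \cite{tao2019cable,tao2022knots}. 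Granting $w=0$, the surface framing of $P$ in $V$ equals the Seifert framing of $K$, so $\Phi$ is a genuine purely cosmetic pair for the null-homologous knot $P\subset V$. That $P$ is hyperbolic in $V$, i.e. $X=V\setminus N(P)$ is hyperbolic, follows from geometrization: a Seifert-fibered $X$ — a solid torus minus a knot, with two boundary tori — is forced to be a cable space (or $T^2\times I$), making $K$ a cable or a torus knot, excluded by \cite{tao2019cable} and classical work. Finally $V_{\pm2}(P)$ is hyperbolic by Thurston's hyperbolic Dehn surgery theorem once one knows $\pm2$ are not exceptional slopes on the $\partial N(P)$-cusp of $X$; this should again follow from the structure above, since a non-hyperbolic $V_{\pm2}(P)$ would yield a non-hyperbolic piece in $S^3_{\pm2}(K)$.

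\emph{Infinitely many hyperbolic knots.} Since $V\cong S^1\times D^2$ and $P$ has winding number $0$, re-embed $V$ in $S^3$ as a tubular neighborhood of an unknot with framing $k\in\Z$; this turns $P$ into a knot $P_k\subset S^3$ with $E(P_k)=X(\sigma_k)$ for the corresponding filling slope $\sigma_k$ on the $T$-cusp of $X$. By Thurston, $E(P_k)$ is hyperbolic for all but finitely many $k$, with $\operatorname{vol}E(P_k)\nearrow\operatorname{vol}X$, so the $P_k$ with $|k|\gg0$ are infinitely many distinct hyperbolic knots. Each admits purely cosmetic surgeries: winding number $0$ makes the Seifert framing insensitive to $k$, so $S^3_r(P_k)$ is $V_r(P)$ filled along $\sigma_k$ and $S^3_{r'}(P_k)$ is $V_{r'}(P)$ filled along $\sigma_k$, and $\Phi$ identifies these because $\Phi|_T=\pm\operatorname{id}$ on $H_1(T)$. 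This last fact is the clean one and the reason the construction closes up: the self-homeomorphism $\Psi|_Z$ of the \emph{knot} exterior $Z=E(C)$ preserves both the meridian and the null-homologous longitude of $C$, hence, being orientation-preserving, is $\pm\operatorname{id}$ on $H_1(\partial Z)=H_1(T)$; so $\Phi|_T=\pm\operatorname{id}$, $\Phi_*\sigma_k=\sigma_k$, and $S^3_r(P_k)\cong S^3_{r'}(P_k)$ orientation-preservingly. The whole argument thus reduces to the two inputs flagged above: making the JSJ-localization precise, and proving the winding number is $0$.
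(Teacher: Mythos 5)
There is a genuine gap, and it starts at the very first step. Your ``localization'' asserts that chasing the nesting of the companion solid tori shows $X$ has exactly one boundary torus besides $\partial N(K)$, so that $V\cong S^1\times D^2$. This is false: two disjoint companion tori both containing $K$ need not be nested (consider a knot $K'$ obtained from a hyperbolic link $K_*\cup U_1\cup U_2$, with $U_1\sqcup U_2$ an unlink, by $1/n$-surgery on $U_1,U_2$; then $S^3\setminus \operatorname{int}N(U_1)$ and $S^3\setminus \operatorname{int}N(U_2)$ are non-nested solid tori containing $K'$, and $X$ has three boundary components). The theorem statement itself allows $V\cong\#^n(S^1\times D^2)$ with $n>1$ precisely for this reason; the paper realizes $V$ via the Fox re-embedding theorem as $S^3\setminus L$ for an unlink $L$, not as a single solid torus. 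This error propagates through your whole argument: the decomposition $S^3_s(K)=V_s(K)\cup_T Z$ with $Z$ a knot exterior, the satellite formulas $\Delta_K=\Delta_P\cdot\Delta_C(t^w)$ and $g(K)=|w|g(C)+g(P)$ with a single companion $C$, the $\Phi|_T=\pm\operatorname{id}$ argument, and the final re-embedding of $V$ as a framed unknot neighborhood all presuppose $n=1$.

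Beyond that, the two places you flag as ``expecting genuine work'' are indeed where the real content lies, and your sketches for them do not close. (1) You do not establish that the JSJ tori of $S^3\setminus K$ remain incompressible in $S^3_{\pm2}(K)$ (the paper's Claim 1, via the Boyer et al.\ classification of patterns whose companion torus compresses under surgery, plus the genus bound $g(K)\ge w(P)\ge 5$), nor that the cosmetic homeomorphism fixes the vertex of $X$ in the JSJ graph rather than moving it into the companion side (the paper's Claims 5--8, which need property P, Wu's theorem on boundary-reducible fillings at distance $\ge2$, and a minimality/re-embedding argument). (2) Your hyperbolicity claim for $V_{\pm2}(P)$ is circular: ``a non-hyperbolic $V_{\pm2}(P)$ would yield a non-hyperbolic piece in $S^3_{\pm2}(K)$'' is not a contradiction, since surgered manifolds may well have non-hyperbolic JSJ pieces. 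The actual danger is that \emph{both} slopes $\pm2$ are exceptional for $X$; ruling this out requires the classification of multi-cusped hyperbolic manifolds admitting two exceptional fillings at distance $4$ (Gordon--Wu's $M_1,M_2,M_{14}$), and then excluding each by Fox re-embedding, the $0$-slope condition, and the genus bound. Finally, your route to winding number $0$ (wanting to rule out $w\ne0$ from $\Delta_C\equiv1$) is left open; the paper instead gets null-homology of $K$ in $V$ from the observation that $(1/n)$-filling the unlink components shifts the cosmetic slopes $\{\pm2\}$ to $\{2-R,-2-R\}$ with $R=n\sum_i\ell k(K_*,U_i)^2$, and Theorem~\ref{thm:strong} forces $R=0$ --- a mechanism entirely absent from your proposal.
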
\vspace{-8pt}

\subsection*{Acknowledgement}
I want to thank my advisor Ian Agol for his continued guidance, support, and many stimulating discussions. I also thank Yi Ni for helpful discussions and the referee for their constructive feedback. This work is partially supported by the Simons Investigator Award 376200.

\section{The proof}
\begin{proof}[Proof of Theorem~\ref{thm:main}]
Suppose Conjecture~\ref{conj:cos} does not hold for some nontrivial knot $K$, which means that $S^3_{+2}(K)$ and $S^3_{-2}(K)$ are purely cosmetic by Theorem~\ref{thm:strong}. We take such $K$ so that the number of JSJ tori in the JSJ decomposition of its complement is minimal. It is well-known that torus knots admit no cosmetic surgery (for example, because they have nontrivial Alexander polynomial, cf. Theorem~\ref{thm:strong}). Thus, to prove the theorem, in view of \cite[Remark~1.7]{daemi2024filtered} which says hyperbolic knots admit no exceptional cosmetic surgeries, it suffices to show that $S^3\backslash K$ has trivial JSJ decomposition. 

Assume, to the contrary, that $S^3\backslash K$ has nontrivial JSJ decomposition. We first examine the JSJ decompositions of $S^3_{\pm2}(K)$. Note that by the distance bound on reducible Dehn fillings \cite{gordon1996reducible}, $S^3_{\pm2}(K)$ are both irreducible. 

\textbf{Claim 1}: Every JSJ torus in $S^3\backslash K$ is incompressible in $S^3_{\pm2}(K)$.\vspace{-10pt}
\begin{proof}
Every JSJ torus $T$ exhibits $K$ as a satellite knot $K=P(C)$ for some pattern $P$ and companion $C$. Suppose $K$ has a nontrivial surgery in which $T$ compresses. By \cite[Theorem~2.13]{boyer2025jsj} (which was attributed to the combined work of \cite{berge1991knots,gabai1989surgery,gabai19901,gordon1983dehn,scharlemann1990producing}), $P$ is either a cable pattern or a $1$-bridge braid pattern with winding number $w(P)\ge5$. Cable knots have no purely cosmetic surgeries \cite{tao2019cable}, so $P$ is $1$-bridge with $w(P)\ge5$. The Seifert genus of $K$ is expressed in terms of $P,C$ as\vspace{-2pt}
\begin{equation}\label{eq:satellite_genus}
g(K)=w(P)g(C)+g(P)\vspace{-1pt}
\end{equation}
where $g(P)$ is the relative Seifert genus of $P$, defined as the minimal genus of an embedded orientable surface in $S^1\times D^2$ cobounding $P$ and $w(P)$ canonical longitudes on the boundary torus \cite{schubert1953knoten}. Here we do not orient knots, so for us $w(P)$ is always a nonnegative integer. In particular, $g(K)\ge w(P)\ge5$, contradicting Theorem~\ref{thm:strong}.
\end{proof}

Let $X$ denote the $3$-manifold piece in the JSJ decomposition of $S^3\backslash K$ that is adjacent to $K$.

\textbf{Claim 2}: $X$ is hyperbolic.\vspace{-10pt}
\begin{proof}
If not, $K$ is either a cable knot or a composite knot (see e.g. \cite[Theorem~2(1)]{budney2006jsj}). Such knot has no purely cosmetic surgery by \cite{tao2019cable,tao2022knots}.
\end{proof}

\textbf{Claim 3}: One of the slopes $\pm2$ on the cusp corresponding to $K$ in the hyperbolic manifold $X$ is not exceptional, meaning that the corresponding Dehn filling $X_{+2}$ or $X_{-2}$ is hyperbolic.\vspace{-10pt}
\begin{proof}
Assume, to the contrary, that neither of $X_{\pm2}$ is hyperbolic. Since the slopes $\pm2$ have intersection number $4$ and $X$ has at least two cusps, by work on exceptional Dehn fillings, especially \cite{gordon1999toroidal,gordon2000annular,gordon2008toroidal}, \cite[Table~1]{gordon2000annular2} and references therein, this implies that $X$ is one of three particular hyperbolic manifolds $M_1,M_2,M_{14}$ in the notation of \cite[Theorem~1.1]{gordon2008toroidal}. Here $M_1$ is the Whitehead link complement, $M_2$ is the complement of a certain $2$-component link with linking number $3$ between its two unknotted components, and $M_{14}$ is not a link complement in $S^3$ \cite[Lemma~24.2]{gordon2008toroidal}. Since $X$ is homeomorphic to a link complement by the Fox re-embedding theorem, $X\not\cong M_{14}$. If $X\cong M_1$, \cite[Lemma~24.1(1)]{gordon2008toroidal} further shows that one of $r,r'$ is the $0$ slope (this is well-defined for cusps in $M_1$, characterized by being null-homologous in $M_1$), contradicting $\{r,r'\}=\{\pm2\}$. If $X\cong M_2$, then $K$ is a satellite knot $P(C)$ for some pattern $P$ with $w(P)=3$, which implies $g(K)\ge w(P)=3$ by \eqref{eq:satellite_genus}, contradicting Theorem~\ref{thm:strong}.
\end{proof}

\textbf{Claim 4}: $X_{\pm2}$ are orientation-preservingly homeomorphic (thus both hyperbolic).\vspace{-10pt}
\begin{proof}
One of $X_{\pm2}$, say $X_{+2}$, is hyperbolic by Claim 3. By Claim 1, the JSJ tori of $S^3_{+2}(K)$ are exactly those of $S^3\backslash K$.

The JSJ decomposition of $S^3_{-2}(K)$ is obtained by removing redundant incompressible tori in the set $\{\text{JSJ tori of $S^3\backslash K$}\}\sqcup\{\text{JSJ tori of $X_{-2}$}\}$, where a torus (necessarily a boundary torus of $X$) is redundant if and only if its two adjacent pieces are Seifert fibered, with matching fibrations on the torus. Thus, by comparing the collection of hyperbolic pieces in the JSJ decompositions of $S^3_{\pm2}(K)$, we see that the JSJ decomposition of $X_{-2}$ contains a single hyperbolic piece, which is orientation-preservingly homeomorphic to $X_{+2}$. It remains to show that $X_{-2}$ contains no Seifert fibered piece.

Each Seifert fibered piece in the JSJ decomposition of $S^3\backslash K$ (equivalently, of $S^3_{+2}(K)$) is either the complement of some $(p,q)$-torus knot in $S^3$, the complement of some $(p,q)$-cable pattern in $S^1\times D^2$, or $P_n\times S^1$ where $P_n$ is the planar surface with $n$ boundaries, $n\ge3$ (see e.g. \cite[Theorem~2(1)]{budney2006jsj}). These three types of Seifert fibered manifolds admit unique Seifert fibrations over base orbifolds $(D^2,p,q)$ ($p,q\ge2$ are coprime), $(S^1\times I,p)$ ($p\ge2$), $P_n$ ($n\ge3$), respectively (see e.g. \cite[Proposition~10.4.16]{martelli2016introduction} for the uniqueness).

Assume, to the contrary, that $X_{-2}$ contains Seifert fibered pieces. By comparing the numbers of Seifert fibered pieces in $S^3_{\pm2}(K)$, we know that there exists some Seifert fibered piece $N_0$ in $X_{-2}$ that combines with some adjacent Seifert fibered pieces $N_1,\cdots,N_k$ in $S^3\backslash K$, into a single Seifert fibered piece $N=\cup_{i=0}^kN_i$ in $S_{-2}^3(K)\cong S_{+2}^3(K)$. By assumption, the base orbifolds of $N,N_1,\cdots,N_k$ are each of the form $(D^2,p,q)$, $(S^1\times I,p)$, or $P_n$, as listed above. Since the base orbifold of $N$ is the union of those of the $N_i$'s along common boundaries, the only possibility is that all $N_i$'s and $N$ are of the form $P_n\times S^1$, for various $n\ge3$. Consequently, the collection of $P_n\times S^1$ pieces in $S^3_{-2}(K)$ has a strictly larger complexity than that of $S^3_{+2}(K)$, a contradiction. Here, one can take the complexity as the negative of the sum of euler characteristics of the base surfaces.
\end{proof}

Let $\Gamma$ denote the JSJ graph of $S^3\backslash K$, which is canonically isomorphic to those of $S^3_{\pm2}(K)$ by Claim 4, and is a tree for topological reasons. Let $*$ denote the vertex of $\Gamma$ that corresponds to $X$. Fix an orientation-preserving homeomorphism $\Phi\colon S^3_{+2}(K)\xrightarrow{\cong}S^3_{-2}(K)$, which induces a tree automorphism $\phi$ of $\Gamma$.

\textbf{Claim 5}: $\Gamma$ is the convex hull of $\{\phi^{(i)}(*)\colon 0\le i<N\}$, where $N$ is the minimal positive integer with $\phi^{(N)}(*)=*$.\vspace{-10pt}
\begin{proof}
Let $\Gamma'$ denote the convex hull of $\{\phi^{(i)}(*)\colon0\le i<N\}$. Then, $\phi$ restricts to an automorphism on the subtree $\Gamma'$ and cyclically permutes $*,\phi(*),\cdots,\phi^{(N-1)}(*)$. Thus, the leaves of $\Gamma'$ are exactly $*,\phi(*),\cdots,\phi^{(N-1)}(*)$. Assume, to the contrary, that $\Gamma'$ is a proper subtree of $\Gamma$. Let $(S^3\backslash K)'$ and $S^3_{\pm2}(K)'$ denote the submanifolds of $S^3\backslash K$ and $S^3_{\pm2}(K)$, respectively, that lie above $\Gamma'$.

Let $e_1,\cdots,e_M$ denote edges in $\Gamma\backslash\Gamma'$ adjacent to some $\phi^{(i)}(*)$, where $M=(d-1)N$, $d$ being the degree of $*$ in $\Gamma$. These edges are in canonical one-one correspondence with connected components of $\Gamma\backslash\Gamma'$, each of which lies under a submanifold $M_i\subset S^3\backslash K$, $i=1,\cdots,M$. The homeomorphism $\Phi$ restricts to a permutation of the $M_i$'s. Since the $M_i$'s are nontrivial knot complements by Alexander's theorem, the boundary torus of each $M_i$ has a canonical parametrization $(m_i,\ell_i)$ up to overall sign, where $m_i,\ell_i\subset\partial M_i$ are the meridian, canonical longitude of the corresponding knot, respectively. These parametrizations are respected by the homeomorphisms $\Phi|_{\partial M_i}$ up to signs.

Apply the Fox re-embedding theorem to $(S^3\backslash K)'$ (see e.g. \cite[Proposition~3,Proposition~2]{budney2006jsj}), we obtain an orientation-preserving homeomorphism 
\begin{equation}\label{eq:Fox}
(S^3\backslash K)'\xrightarrow{\cong}S^3\backslash(K_*\cup L),
\end{equation}
where
\begin{enumerate}[(i)]
\item $K_*$ is a knot, $L$ is an unlink with components $U_1,\cdots,U_M$;
\item The cusp corresponding to $K$ is mapped to the cusp corresponding to $K_*$, with parametrization;
\item For each $i$, the cusp $\partial M_i$ is mapped to the cusp corresponding to $U_i$, where $m_i$ is mapped to $\ell_i'$, the canonical longitude of $U_i$, and $\ell_i$ is mapped to $m_i'$, the canonical meridian of $U_i$.
\end{enumerate}

Pick an integer $n>0$ and perform $(1/n)$-Dehn fillings on cusps of $S^3\backslash(K_*\cup L)$ corresponding to components in $L$, the knot $K_*$ becomes a new knot $K'=K'_n\subset S^3$, and the homeomorphism $\Phi|_{(S^3_{+2}(K))'}$, under the identification \eqref{eq:Fox}, extends to an orientation-preserving homeomorphism $S^3_{2-R}(K')\xrightarrow{\cong}S^3_{-2-R}(K')$, where $R=n\sum_i\ell k(K_*,U_i)^2\ge0$. It follows that $K'$ admits purely cosmetic surgeries, and by Theorem~\ref{thm:strong} that $R=0$, hence $\ell k(K_*,U_i)=0$ for all $i$.

The JSJ pieces in $(S^3\backslash K)'\cong S^3\backslash(K_*\cup L)$ adjacent to the cusps that we fill are hyperbolic, since they are homeomorphic to either $X$ or $X_{+2}\cong X_{-2}$. Hence, by Thurston's hyperbolic Dehn filling theorem, for large enough $n$, all these pieces remain hyperbolic after Dehn fillings. It follows that, for large $n$, $K'$ is nontrivial and the JSJ graph of $S^3\backslash K'$ is canonically isomorphic to $\Gamma'$. This yields a contradiction with the minimality of $K$.
\end{proof}

\textbf{Claim 6}: Every JSJ torus $T$ of $S^3\backslash K$ bounds a solid torus $V$ in $S^3$ containing $K$ by Alexander's theorem. If $V$ does not contain the JSJ piece of $S^3\backslash K$ corresponding to the vertex $\phi^{(N-1)}(*)$, then the winding number of $K$ in $V$ is $1$.\vspace{-10pt}
\begin{proof}
Let $V_{+2}(K)$ be the $(+2)$-surgery on $V$ along $K$. By assumption, the homeomorphism $\Phi$ maps $V_{+2}(K)$ into $S^3\backslash K$, hence $V_{+2}(K)$ is homeomorphic to a knot complement by Alexander's theorem. If the winding number of $K$ in $V$ is even, then $H_1(V_{+2}(K);\Z/2)\cong(\Z/2)^2$, a contradiction. Therefore the winding number is odd, which must be $1$ by \eqref{eq:satellite_genus} since the $g(K)=2$ by Theorem~\ref{thm:strong}.
\end{proof}

\textbf{Claim 7}: The JSJ graph $\Gamma$ contains exactly $2$ vertices $*,\phi(*)$.\vspace{-10pt}
\begin{proof}
Otherwise, by Claim 5, the distance between $*$ and $\phi^{(N-1)}(*)$ is at least $2$, meaning that there are at least two JSJ tori in $S^3\backslash K$ separating $K$ and the JSJ piece corresponding to $\phi^{(N-1)}(*)$. Therefore we may write $K=P_1(P_2(C))$ for some nontrivial satellite patterns $P_1,P_2$ and companion $C$, where $w(P_1)=w(P_2)=1$ by Claim 6. A nontrivial winding number $1$ satellite pattern has nonzero relative Seifert genus, hence $g(K)\ge g(P_2(C))+1\ge g(C)+2>2$ by \eqref{eq:satellite_genus}, contradicting Theorem~\ref{thm:strong}.
\end{proof}

\textbf{Claim 8}: We have a contradiction.\vspace{-10pt}
\begin{proof}
By Claim 7, $S^3\backslash K$ (resp. $S^3_{\pm2}(K)$) contains a unique JSJ torus $T$, which exhibits $K$ as a satellite knot $P(J)$. Let $V$ denote the solid torus in $S^3$ bounded by $T$. In $S^3\backslash K$ (resp. $S^3_{\pm2}(K)$), the JSJ piece corresponding to $*$ is $X=V\backslash K$ (resp. $X_{\pm2}=V_{\pm2}(K)$) and the piece corresponding to $\phi(*)$ is $S^3\backslash J$. Let $(m,\ell)$ be a parametrization of $T$ by the meridian and the canonical longitude of $J$ (canonical up to overall sign) as before, where $m,\ell$ have oriented intersection $+1$ when $T$ is oriented as the boundary of $V$.

\begin{figure}
	\centering
	\begin{tikzpicture}[thick,scale=0.8, every node/.style={scale=0.8}]
		\begin{scope}[xshift=-100pt]
			
			\begin{knot}[
				clip width=8,
				clip radius=8pt,
				flip crossing/.list={2,4,6},
				]
				\node (A) at (0,0) [draw,minimum width=60pt,minimum height=25pt,thick] {$P$};
				\strand [thick] (-4,0)
				to[out=270, in=180] (-3,-0.5)
				to[out=0, in=270] (-2,0)
				to[out=90,in=0] (-3,0.5)
				to[out=180,in=90] (-4,0);
				\strand [thick] (A.90)
				to[out=90, in=0] (-1.5,2)
				to[out=180, in=90] (-3,0)
				to[out=270, in=180] (-1.5,-2)
				to[out=0,in=270] (A.270);
				\strand [thick] (A.50)
				to[out=90, in=0] (-1.6,2.4)
				to[out=180, in=90] (-3.4,0)
				to[out=270, in=180] (-1.6,-2.4)
				to[out=0,in=270] (A.310);
				\strand [thick] (A.130)
				to[out=90, in=0] (-1.4,1.6)
				to[out=180, in=90] (-2.6,0)
				to[out=270, in=180] (-1.4,-1.6)
				to[out=0,in=270] (A.230);
				\node at (0.2,2) {$-2$};
				\node at (-4.3,0.4) {$-1$};
			\end{knot}
		\end{scope}
		\node at (-1.35,0) {\Huge $\sim$};
		\begin{scope}[xshift=100pt]
			\begin{knot}[
				clip width=8,
				clip radius=8pt,
				flip crossing/.list={2,4,6},
				]
				\node (A) at (0,0) [draw,minimum width=60pt,minimum height=25pt,thick] {$P$};
				\node (B) at (-3,0) [draw,minimum width=50pt,minimum height=25pt,thick] {$+1$};
				\strand [thick] (A.90)
				to[out=90, in=0] (-1.5,2)
				to[out=180, in=90] (B.90);
				\strand [thick] (B.270)
				to[out=270, in=180] (-1.5,-2)
				to[out=0,in=270] (A.270);
				\strand [thick] (A.50)
				to[out=90, in=0] (-1.6,2.4)
				to[out=180, in=90] (B.130);
				\strand [thick] (B.230)
				to[out=270, in=180] (-1.6,-2.4)
				to[out=0,in=270] (A.310);
				\strand [thick] (A.130)
				to[out=90, in=0] (-1.4,1.6)
				to[out=180, in=90] (B.50);
				\strand [thick] (B.310)
				to[out=270, in=180] (-1.4,-1.6)
				to[out=0,in=270] (A.230);
				\node at (0.2,2) {$-1$};
			\end{knot}
		\end{scope}
	\end{tikzpicture}
	\caption{A Kirby move showing that the Dehn filling of $X_{-2}$ by $\ell-m$ is homeomorphic to the $(-1)$-surgery on $P_{+1}(U)$.}
	\label{fig:Kirby}
\end{figure}

Since the winding number of the satellite pattern $P$ is $1$ by Claim 6, $X=V\backslash K$ is a homology $T^2\times I$ with first homology generated by $m$ and $\ell$, and $m,\ell$ are homologous to the meridian, the canonical longitude of $K$ on the boundary cusp of $S^3\backslash K$, respectively. Consequently, $\ell\pm2m$ gives the only slope on $T$ that is null-homologous in $X_{\pm2}$, for each sign $\pm$ respectively. Since the homeomorphism $\Phi$ restricts to orientation-preserving homeomorphisms $X_{+2}\xrightarrow{\cong}S^3\backslash J$, $S^3\backslash J\xrightarrow{\cong}X_{-2}$, we conclude that $\Phi|_T(\ell+2m)=\epsilon\ell$ and $\Phi|_{T}(\ell)=\epsilon'(\ell-2m)$ for some signs $\epsilon,\epsilon'$. Since $\Phi|_T\colon T\xrightarrow{\cong}T$ is an orientation-reversing homeomorphism, we have $\epsilon'=-\epsilon$. It follows that $\Phi|_T(m)=\pm(\ell-m)$, which implies that the Dehn filling of $X_{-2}$ by the slope $\ell-m$ is $S^3$. But this Dehn filling is also the $(-1)$-surgery on $P_{+1}(U)$ ($P_n$ denotes the $n$-twisted pattern $P$), as can be seen from Figure~\ref{fig:Kirby}, so $P_{+1}(U)$ is the unknot by property P \cite[Theorem~2]{gordon1989knots}. Similarly, from $\Phi|_T^{-1}(m)=\pm(\ell+m)$, we conclude that $P_{-1}(U)$ is the unknot. We have thus shown that the Dehn fillings of $X$ on the cusp $T$ by slopes $\ell\pm m$ are both $S^1\times D^2$. However, \cite[Theorem~1]{wu1992incompressibility} implies that any two slopes on the same cusp of $X$ with boundary-reducible fillings have intersection number at most $1$, a contradiction.
\end{proof}
The proof of Theorem~\ref{thm:main} is complete.
\end{proof}

\begin{proof}[Proof of Theorem~\ref{thm:satellite}]
The proof of Theorem~\ref{thm:main} mostly applies, except that we do not get a contradiction if Claim 5 fails.

Use the notations as before. If $\phi(*)\ne*$, the argument in Claim 5 shows that by replacing $K$ by another satellite knot $K'$ if necessary, we may assume $\Gamma$ is the convex hull of $\{\phi^{(i)}(*)\colon0\le i<N\}$. Then the rest of the argument carries through and still yields a contradiction. Therefore we must have $\phi(*)=*$, meaning that $\Phi$ restricts to a homeomorphism $X_{+2}\xrightarrow{\cong}X_{-2}$. Let $V$ denote $X$ with $K$ refilled in; thus, $K$ admits purely cosmetic hyperbolic surgeries as a knot in $V$. Under the Fox re-embedding identification \eqref{eq:Fox}, $V\cong S^3\backslash L\cong\#^M(S^1\times D^2)$, and the fact that $K$ is null-homologous in $V$ follows from the conclusion $\ell k(K_*,U_i)=0$ in the proof of Claim 5. We obtain infinitely many hyperbolic knots in $K'_n\subset S^3$ with purely cosmetic surgeries by performing $(1/n)$-surgeries on the components of $L$ as in the proof of Claim 5 (the infinitude follows from the fact that the hyperbolic volumes of $K_n'$ converge to that of $V\backslash K$ strictly from below as $n\to\infty$).
\end{proof}

\printbibliography

\end{document}